\newcommand{\sgn}{\mathop \mathrm{sgn}}
\newtheorem{Proposition}{Proposition}
\theoremstyle{definition}
\theoremstyle{plain}
\newtheorem{Lemma}{Lemma}
\newtheorem{Theorem}{Theorem}
\theoremstyle{plain}
\theoremstyle{remark}
\newcommand{\vp}{X}
\newcommand{\R}{\mathbb{R}}
\newcommand{\E}{\mathbb{E}}
\begin{document}
\begin{titlepage}
\begin{center}
\large {\bf Olga Aryasova}
\end{center}
Institute of Geophysics, National Academy of Sciences of Ukraine,
Palladin pr. 32, 03142, Kyiv, Ukraine; National Technical University of Ukraine "Igor Sikorsky Kyiv Politechnic Institute", Kyiv, Ukraine, oaryasova@gmail.com, ORCiD ID  0000-0002-5870-8349.
\vskip 10 pt
\begin{center}
\large {\bf Andrey Pilipenko}
\end{center}
Institute of Mathematics,  National Academy of Sciences of
Ukraine, Tere\-shchen\-kivska str. 3, 01004, Kyiv, Ukraine; National Technical University of Ukraine "Igor Sikorsky Kyiv Politechnic Institute", Kyiv, Ukraine,\\ pilipenko.ay@gmail.com.
\vskip 10 pt
\begin{center}
\large {\bf Sylvie Roelly}
\end{center}
Institut f\"ur  Mathematik,  Universit\"at Potsdam, Karl-Liebknecht-Str. 24-25, D-14476, Potsdam OT Golm, Germany, roelly@math.uni-potsdam.de.
\vskip 20 pt

{\bf Funding}
\vskip 10 pt

{This work was partially supported by DFG project "Stochastic Dynamics with Interfaces", no. 452119141, and the Alexander von Humboldt Foundation (Research Group Linkage cooperation {\it Singular diffusions: analytic and stochastic approaches}) between the University of Potsdam and the Institute of Mathematics of the National Academy of Sciences of Ukraine.}
\end{titlepage}

\begin{center}
{\Large \bf Exponential a.s. synchronization of one-dimensional diffusions with non-regular coefficients}
\end{center}
\begin{abstract} We study the asymptotic behaviour of a real-valued diffusion whose non-regular drift is given as a sum of a dissipative term and a bounded measurable one. We prove that
two trajectories of that diffusion converge a.s. to one another  at an exponential explicit rate as soon as the dissipative coefficient is large enough. A similar result in $L_p$ is obtained.
\end{abstract}
\vskip 10 pt
{\bf Keywords:} stochastic differential equation (SDE); singular drift; synchronization.
\vskip 20 pt

\section{Introduction}\label{Sec_Introduction}
The main object of our study is a one-dimensional stochastic differential equation (SDE) of the type
\begin{equation*}\label{eq_main0}
\left\{
\begin{aligned}
d\vp_t&=\left(-\lambda\vp_t+a(\vp_t)\right)dt+\sigma(\vp_t) \, dw_t, \ t>0,\\
\vp_0&\equiv x,\\
\end{aligned}\right.
\end{equation*}
where $\lambda$ is a positive real number, the drift $a$ is \emph{measurable}, the diffusion coefficient $\sigma$ is a Lipschitz continuous non-degenerate function, and $(w_t)_{t\geq0}$ is a Wiener process.

Thanks to the celebrated transform method, Zvonkin proved in \cite{Zvonkin74} that this SDE admits a unique strong solution, which we will denote by $(X_t^x)_{t>0}$.
Moreover, it was proved  during the last decade that due to the presence of noise, the family of processes  $(X_t^x)_{t\geq 0, x\in \R}$ shows good spatial regularity properties even if the drift function is discontinuous, see for example
\cite{Aryasova+12, Banos+2017, Bogachev+15, Catellier+2016, Fedrizzi+13b, Flandoli+10, MeyerBrandis+10, Mohammed+12, Hudde+19}.

Concerning the time asymptotic stability of the process $(X_t^x)_{t\geq 0}$ there are much less results in the literature. In case $\lambda$ is large enough, which corresponds to a strong attraction of the dynamics towards 0 and a strong dissipativity, it is natural to expect that, asymptotically in time, $X_t^x$ will forget its initial position $x$. Indeed, under Lipschitz continuity assumption on the drift function $a$, it is proved e.g. in \cite{Ito+64}, that the $L_p$-distance between $X_t^x$ and $X_t^y, y\not = x$, vanishes as $t$ tends to $+\infty$, but no rate is available. In \cite{Flandoli+2017} the stabilisation is shown as a convergence in probability of $X_t^x-X_t^y$ towards 0, under ${\mathcal C}^1$-regularity assumption on the drift function via the negativity of the associated top Lyapunov exponent.
For diffusions whose drift function is not differentiable but admits a finite variation, an explicit representation of the Sobolev derivative of $x \mapsto X_t^x$ can be found in \cite{Aryasova+12}. This representation makes it possible to find an exponential decreasing rate for $|X_t^x - X_t^y|, y\not = x$ as $t\to\infty$, when a stationary distribution exists.
Recently, such   asymptotic stability was obtained in a multidimensional framework, for diffusions whose drift function admits jump discontinuities concentrated along a hyperplane, see \cite{Aryasova+2019}.

In the present paper, we address  and solve the question of  \emph{almost sure synchronization} - see the exact definition in \eqref{eq_stability_dissip} - in high dissipative regime ($\lambda$ large) for a wide class of SDEs with {\it irregular drift} functions: the function $a$ is only supposed to be the sum of a Lipschitz function and of a bounded {\it  measurable} one. Furthermore, we exhibit an explicit {\it exponential convergence rate} to 0 for $|X_t^x- X_t^y|$, both {\it almost surely} (see \eqref{eq:clambda} and \eqref{eq:clambda3'}) and in $L_p$. To our knowledge it is the first result of that type under such general assumptions.

Note that in absence of noise ($\sigma \equiv 0$), there is no reason to expect synchronization of $(X_t^x)_{t\geq 0, x\in \R}$ asymptotically in time. Indeed, consider the ODE \mbox{$u'(t)=-\lambda u(t) + \ \sgn(u(t))$} with initial condition $x $, whose unique solution is given by \mbox{$u(t)= \frac{\sgn (x)}{\lambda} + \left(x - \frac{\sgn (x)}{\lambda}\right) e^{-\lambda t}$}. Thus
$
\lim_{t\to\infty}u(t)=\frac{\sgn (x)}{\lambda}
$
which exhibits a clear discontinuity at the point $x=0$, which corresponds in fact to the  (unique) discontinuity point of the drift function $a = \sgn$. We are thus in presence of a phenomena known in the literature as {\it synchronization by noise}, see \cite{Flandoli+2017}.

In the spirit of Zvonkin, our approach is based on an accurately chosen space-transform
in such a way that the transformed SDE - written via the new coordinate - has a simpler structure.
A similar method could theoretically be used in more general context - multidimensional diffusions or  SDEs with L\'evy-noise. However, the construction of corresponding transforms requires the investigation of elliptic equations whose solution is a non-trivial problem.

The paper is organized as follows. The main results are formulated in Section \ref{Sec_Main} and the proofs are presented in Section \ref{Sec_Proof}.

\section{Main results}\label{Sec_Main}
First we study the asymptotic behavior with respect to its initial condition of the strong solution of an SDE with regular dissipative drift term. Though the result seems to be well known, we failed to find an exact reference. Besides, the proof is instructive itself.
\begin{Proposition}\label{Prop1}
Consider the SDE
\begin{equation}\label{eq_dissip}
dY_t=b(Y_t)dt+\sigma(Y_t) \,dw_t, \ t>0,
\end{equation}
where $(w_t)_{t\geq0}$ is a Wiener process.
Suppose that the following assumptions hold:
\begin{enumerate}
\item [($\mathcal{H}_1$)]
The drift $b$ is 
continuous and  satisfies a dissipative condition:
$$
\exists D_b>0 \quad \forall x,y\in \R \quad \left(b(y)-b(x)\right)(y-x)\leq - D_b(y-x)^2;
$$
\item [($\mathcal{H}_2$)]  The diffusion coefficient $\sigma$ is a global Lipschitz continuous function :
\begin{equation*}
    \exists L_\sigma>0 \quad \forall x,y\in \R \quad    |\sigma(y)-\sigma(x)|\leq L_\sigma|y-x|.
        \end{equation*}
\end{enumerate}
Then, denoting by $(Y_t^x)_{t\geq 0}$ the unique strong solution of \eqref{eq_dissip} starting in $x \in \R$, the following \emph{almost sure synchronization} at exponential rate holds: for any $ c<D_b$,
\begin{equation}\label{eq_stability_dissip}
\forall x,y\in \R, \quad  \quad \lim_{t\to +\infty}|Y_t^y-Y_t^x| \, e^{c t}= 0 \quad \mbox{ a.s. }
\end{equation}
Moreover, if $c_{p,b,\sigma}:= D_b - \frac{p-1}{2}L_\sigma^2$ is positive, the following bound holds in $L_p, p\geq2 $:
\begin{equation}\label{eq_stability_dissip_Lp}
\forall x,y\in \R, \quad \forall t>0,  \quad \parallel Y_t^y-Y_t^x\parallel_p \leq |y-x| \, e^{-c_{p,b,\sigma} t}.
\end{equation}
\end{Proposition}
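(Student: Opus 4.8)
The plan is to work directly with the difference process $Z_t:=Y_t^y-Y_t^x$, which, since both solutions are driven by the same Wiener process, solves
\[
dZ_t=\bigl(b(Y_t^y)-b(Y_t^x)\bigr)\,dt+\bigl(\sigma(Y_t^y)-\sigma(Y_t^x)\bigr)\,dw_t,\qquad Z_0=y-x.
\]
We may assume $x\neq y$. Whenever $Z_t\neq0$, set $\beta_t:=\bigl(b(Y_t^y)-b(Y_t^x)\bigr)/Z_t$ and $\gamma_t:=\bigl(\sigma(Y_t^y)-\sigma(Y_t^x)\bigr)/Z_t$; then $(\mathcal H_1)$ gives $\beta_t\le -D_b$ and $(\mathcal H_2)$ gives $|\gamma_t|\le L_\sigma$. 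A preliminary remark: by pathwise uniqueness (a standard consequence of $(\mathcal H_1)$--$(\mathcal H_2)$, obtained from a Gronwall estimate on $Z_t^2$), if $Z$ vanishes at some time it vanishes thereafter; so, writing $\tau:=\inf\{t\ge0:Z_t=0\}$, the claims \eqref{eq_stability_dissip} and \eqref{eq_stability_dissip_Lp} are trivial on $\{\tau<\infty\}$, and it suffices to argue on $\{\tau=\infty\}=\{Z_t\neq0\ \forall t\ge0\}$, where all the expressions below are well defined (the integrands are locally bounded since $Y^x,Y^y$ have continuous paths).

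For the almost sure statement I would apply Itô's formula to $t\mapsto\log(Z_t^2)$ (localizing away from $0$ to be fully rigorous). The decisive point is that the Itô correction term cancels --- and even over-cancels --- the contribution of $\sigma$, so that
\[
\log(Z_t^2)=\log\bigl((y-x)^2\bigr)+\int_0^t\bigl(2\beta_s-\gamma_s^2\bigr)\,ds+2\int_0^t\gamma_s\,dw_s\ \le\ \log\bigl((y-x)^2\bigr)-2D_b\,t+M_t,
\]
where $M_t:=2\int_0^t\gamma_s\,dw_s$ is a continuous local martingale with $\langle M\rangle_t\le 4L_\sigma^2\,t$. By the strong law of large numbers for continuous local martingales (if $\langle M\rangle_\infty<\infty$ then $M_t$ converges, while if $\langle M\rangle_\infty=\infty$ then $M_t/\langle M\rangle_t\to0$ and $\langle M\rangle_t/t$ stays bounded) one has $M_t/t\to0$ a.s.; hence $|Z_t|\le|y-x|\exp(-D_bt+\tfrac12 M_t)$, so that for every $c<D_b$ the quantity $|Y_t^y-Y_t^x|\,e^{ct}\le|y-x|\exp\bigl(-(D_b-c)t+\tfrac12 M_t\bigr)$ tends to $0$ almost surely as $t\to\infty$, which is \eqref{eq_stability_dissip}. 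I view the transform $z\mapsto\log(z^2)$ as the crux of the proof, and as essentially its only difficulty: working with $Z_t^2$ directly would merely yield the rate $2D_b-L_\sigma^2$, which may well be negative, whereas after the logarithmic change of variable $L_\sigma$ disappears entirely from the exponential rate. The accompanying localization and the precise form of the martingale law of large numbers are routine.

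For the $L_p$ bound, $p\ge2$, I would instead apply Itô's formula to $t\mapsto e^{pc_{p,b,\sigma}t}|Z_t|^p$ (the map $z\mapsto|z|^p$ is $C^2$ for $p\ge2$). Using $(\mathcal H_1)$ and $(\mathcal H_2)$ exactly as above, the finite-variation part of this semimartingale is nonpositive --- its $dt$-density being at most $p\,e^{pc_{p,b,\sigma}t}|Z_t|^p\bigl(c_{p,b,\sigma}-D_b+\tfrac{p-1}{2}L_\sigma^2\bigr)=0$ by the definition of $c_{p,b,\sigma}$ --- so $e^{pc_{p,b,\sigma}t}|Z_t|^p-|y-x|^p$ is dominated by a local martingale. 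Localizing by stopping times $\tau_n\uparrow\infty$, taking expectations and letting $n\to\infty$ with Fatou's lemma gives $\E|Z_t|^p\le|y-x|^pe^{-pc_{p,b,\sigma}t}$, and extracting the $p$-th root yields \eqref{eq_stability_dissip_Lp}.
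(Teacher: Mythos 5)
Your proof is correct and, for the almost-sure statement, follows essentially the same route as the paper: It\^o's formula applied to the logarithm of the (squared) difference on the event where the two solutions have not yet met, the observation that the It\^o correction term only helps, and a martingale law of large numbers (the paper phrases this via a time-changed Brownian motion and the law of the iterated logarithm, you via $M_t/\langle M\rangle_t\to 0$ together with $\langle M\rangle_t\le 4L_\sigma^2 t$; the two are interchangeable). Two steps differ, both in the direction of making the argument more self-contained. First, for the coalescence after the meeting time $\tau$ the paper invokes Proposition 2.1 of Scheutzow and Schulze, whereas you derive it from pathwise uniqueness (Gronwall on $Z_t^2$); that is fine, but you should state explicitly that you restart the equation at the stopping time $\tau$ (strong Markov property), since pathwise uniqueness by itself is a statement about a fixed deterministic initial time. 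Second, for the $L_p$ bound the paper first establishes $\sup_{t\le T}\E|Y_t^y-Y_t^x|^p<\infty$ via the stochastic Gronwall lemma in order to justify that the stochastic integral is a true centered martingale before taking expectations; your localization-plus-Fatou argument reaches the same conclusion without that lemma, because with the choice $k=pc_{p,b,\sigma}$ the finite-variation part is pathwise nonpositive, so the localized process is a supermartingale. Both routes are valid; yours is more elementary, while the paper's yields the uniform moment bound as a by-product.
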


\vspace{5mm}

The main result of the paper, which follows, concerns the asymptotic behavior of the solution of an SDE generalising \eqref{eq_dissip}, whose drift function $b$ is the sum of a linear dissipative term, a globally Lipschitz term $\beta$ and a {\it non regular} bounded term $\alpha$.
\begin{Theorem}\label{Theorem_main}
Consider the SDE
\begin{equation}\label{eq_main}
\left\{
\begin{aligned}
d\vp_t&=\left(-\lambda\vp_t+\beta (\vp_t) +\alpha(\vp_t)\right) \, dt+\sigma(\vp_t) \, dw_t, \ t>0,\\
\vp_0&=x,\\
\end{aligned}\right.
\end{equation}
where $(w_t)_{t\geq0}$ is a Wiener process.
Suppose that the following assumptions hold:
\begin{enumerate}
\item [($\mathcal{A}_1$)]   The function $\beta$ is global Lipschitz continuous:
\begin{equation*}\label{eq_beta_Lip}
    \exists L_\beta\geq 0\quad \forall x,y\in \R \quad  |\beta(y)-\beta(x)|\leq L_\beta|y-x|;
        \end{equation*}
\item [($\mathcal{A}_2$)] The function $\sigma $ is global Lipschitz continuous:
 \begin{equation*}
         \exists L_\sigma\geq0 \quad \forall x,y\in \R \quad
|\sigma(y)-\sigma(x)|\leq L_\sigma|y-x|
        \end{equation*}
and it is uniformly elliptic:
        \begin{equation*}
      \exists c_\sigma>0 \quad \forall x\in \R  \quad \sigma^2(x)\geq c_\sigma.
        \end{equation*}
\end{enumerate}
Assume also that one of the following two conditions is satisfied:
\begin{enumerate}
\item [($\mathcal{A}_3$)] the function $\alpha $ is bounded measurable with compact support

or

\item [($\mathcal{A}_3'$)] the function $\alpha $ is measurable and its absolute value is a.s. bounded by a  bounded global Lipschitz function $g\in L_1(\R)$; moreover the functions $\beta$ and $ \sigma$  are supposed to be bounded too.
\end{enumerate}

Then, in high dissipative regime - $\lambda$ large enough - the strong solutions
 of \eqref{eq_main} $X_t^x$ and $X_t^y$ starting at different positions
$x$ and $y$ \emph{almost sure synchronize} at exponential rate,
i.e., there exists $\lambda_0$ such that for any $\lambda>\lambda_0$ there exists
a positive constant $c_\lambda$ given explicitly in \eqref{eq:synch_lambda0},
\eqref{eq:clambda}, \eqref{eq:clambda3'} such that
\begin{equation}\label{eq_theorem_stabil}
 \forall x,y\in \R \quad \lim_{t\to\infty}|X_t^y- X_t^x| \, e^{c_\lambda t}=  0 \quad \mbox{a.s.} \, .
\end{equation}
Moreover, the following bound holds in $L_p, p\geq2 $:
\begin{equation}\label{eq_theorem_ineq}
 \exists C>0, \,c_{\lambda,p}>0 \quad \forall x,y \in \R, \quad \forall t\geq0, \quad
\parallel X_t^y- X_t^x \parallel_p \leq C \, |y-x| \, e^{-c_{\lambda,p} t}.
\end{equation}
\end{Theorem}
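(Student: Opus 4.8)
\medskip
\noindent\textbf{Strategy.}
The plan is to reduce \eqref{eq_main}, in the spirit of Zvonkin, to an SDE of the form \eqref{eq_dissip} to which Proposition~\ref{Prop1} applies. In dimension one the Zvonkin ``PDE'' is an ODE, so it can be treated explicitly. I would look for $\phi=\mathrm{id}+u\colon\R\to\R$, where $u$ is the bounded solution of the resolvent equation
\[
\tfrac12\sigma^2(x)u''(x)+\bigl(-\lambda x+\beta(x)+\alpha(x)\bigr)u'(x)-\mu\,u(x)=-\alpha(x),
\]
with $\mu>0$ to be tuned (I expect $\mu\asymp\lambda^2$, see below). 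Uniform ellipticity forces $u\in W^{2,p}_{\mathrm{loc}}\subset C^1$, the maximum principle gives $\|u\|_\infty\le\|\alpha\|_\infty/\mu$, and the It\^o--Tanaka formula applied to $Z_t^x:=\phi(X_t^x)$, together with the equation for $u$, shows that $Z$ solves $dZ_t=\tilde b(Z_t)\,dt+\tilde\sigma(Z_t)\,dw_t$ with
\[
\tilde b=\bigl(-\lambda\,\mathrm{id}+\beta+\mu u\bigr)\circ\phi^{-1},\qquad \tilde\sigma=\bigl((1+u')\,\sigma\bigr)\circ\phi^{-1},
\]
so that $\tilde b'(z)=\dfrac{-\lambda+\beta'(x)+\mu\,u'(x)}{1+u'(x)}$ at $x=\phi^{-1}(z)$. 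Everything then hinges on controlling $u$ in $C^1$ and $\tilde\sigma$ in $C^{0,1}$.

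\medskip
\noindent\textbf{The key (and hard) estimates.}
The heart of the matter is to bound $\|u'\|_\infty$ and the Lipschitz constant $L_{\tilde\sigma}$ of $\tilde\sigma$ by quantities that \emph{improve} as $\lambda\to\infty$. Global Schauder estimates are not directly available because the first-order coefficient $-\lambda x+\cdots$ is unbounded; instead I would use the probabilistic representation $u(x)=\E\int_0^\infty e^{-\mu t}\alpha(X_t^x)\,dt$ and a Bismut--Elworthy--Li type formula for $u'$, splitting the time integral at $t=1$ to absorb the singularity of the Bismut weight and estimating $\E|\partial_xX_t^x|^2$ through the explicit representations of the Sobolev derivative of the flow $x\mapsto X_t^x$ from the works cited in the introduction. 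Since the strong drift $-\lambda x$ only helps in these flow estimates, one gets $\|u'\|_\infty\le C\,\mu^{-1/2}$ with $C$ independent of $\lambda$. Choosing $\mu=\varepsilon\lambda^2$ with $\varepsilon$ a small fixed constant yields, for $\lambda$ large, $\|u'\|_\infty=\mathcal O(\lambda^{-1})<\tfrac12$, so that $\phi$ is bi-Lipschitz with $\phi(\pm\infty)=\pm\infty$; and from $\tfrac12\sigma^2u''=\mu u+\alpha-(-\lambda x+\beta+\alpha)u'$ one checks $\|u''\|_\infty=\mathcal O(1)$, whence $\tilde\sigma$ is globally Lipschitz with $L_{\tilde\sigma}=\mathcal O(1)$, uniformly in $\lambda$. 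This is where the two alternative hypotheses on $\alpha$ enter: under $(\mathcal A_3)$, $u$ solves the homogeneous ODE off the compact set $\mathrm{supp}\,\alpha$, which forces the algebraic decay of $u,u',u''$ at $\pm\infty$; under $(\mathcal A_3')$, the integrability $g\in L_1$ together with the boundedness of $\beta,\sigma$ give $u(x)=\mathcal O(1/|x|)$ and the same global bounds.

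\medskip
\noindent\textbf{Conclusion.}
Given these estimates, for $\lambda>\lambda_0$ --- an explicit threshold built from $L_\beta$, $\varepsilon$ and $C$, with $\varepsilon$ chosen so that $C\sqrt\varepsilon<1$ --- the numerator $-\lambda+\beta'(x)+\mu u'(x)$ is bounded above by $-(1-C\sqrt\varepsilon)\lambda+L_\beta<0$ and $1+u'(x)\in[\tfrac12,\tfrac32]$, so $\tilde b$ is continuous and $\tilde b'(z)\le-D_\lambda$ with $D_\lambda:=\tfrac23\bigl((1-C\sqrt\varepsilon)\lambda-L_\beta\bigr)\to\infty$. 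Thus $(\mathcal H_1)$ holds for $\tilde b$ and $(\mathcal H_2)$ for $\tilde\sigma$, and Proposition~\ref{Prop1} gives $\lim_{t\to\infty}|Z_t^y-Z_t^x|\,e^{ct}=0$ a.s.\ for every $c<D_\lambda$; since $\phi$ is bi-Lipschitz, $|X_t^y-X_t^x|\le2|Z_t^y-Z_t^x|$, which is \eqref{eq_theorem_stabil} with any $c_\lambda<D_\lambda$. For the $L_p$ part, \eqref{eq_stability_dissip_Lp} applied to $Z$ is valid once $c_{\lambda,p}:=D_\lambda-\tfrac{p-1}{2}L_{\tilde\sigma}^2>0$, which holds for $\lambda$ large because $D_\lambda\to\infty$ while $L_{\tilde\sigma}=\mathcal O(1)$; it yields $\|Z_t^y-Z_t^x\|_p\le|\phi(y)-\phi(x)|\,e^{-c_{\lambda,p}t}$, and bi-Lipschitzness of $\phi$ turns this into \eqref{eq_theorem_ineq} with $C=3$. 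Tracking the constants through these steps produces the explicit $\lambda_0$ and rates $c_\lambda$ announced in \eqref{eq:synch_lambda0}, \eqref{eq:clambda}, \eqref{eq:clambda3'}.

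\medskip
\noindent\textbf{Main obstacle.}
The decisive difficulty is the $\lambda$-uniform $C^1$ estimate for the corrector $u$: the strong dissipation $-\lambda x$ has to be turned into a gain, so that $u$ has size $\mu^{-1/2}$ in $C^1$ uniformly in $\lambda$ while $\mu$ is simultaneously allowed to be as large as $\asymp\lambda^2$ --- precisely this is what keeps the transformed diffusion coefficient Lipschitz with a $\lambda$-free constant and forces the dissipativity constant $D_\lambda$ of the transformed drift to blow up. The remaining ingredients (the It\^o--Tanaka computation, the passage through $\phi$, and the two bookkeeping arguments for the a.s.\ and $L_p$ statements) are then routine once Proposition~\ref{Prop1} is available.
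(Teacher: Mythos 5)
Your overall architecture (transform the SDE so that the irregular drift disappears, check that the transformed drift satisfies $(\mathcal{H}_1)$ and the transformed diffusion satisfies $(\mathcal{H}_2)$, invoke Proposition~\ref{Prop1}, and pull back through a bi-Lipschitz map) is exactly the paper's, and the final bookkeeping in your ``Conclusion'' paragraph is sound \emph{conditionally} on your estimates. But the proof has a genuine gap precisely where you locate the ``decisive difficulty'': the bound $\|u'\|_\infty\leq C\mu^{-1/2}$ with $C$ independent of $\lambda$ is asserted, not proved, and the route you propose to it is essentially circular. The Bismut--Elworthy--Li argument requires a $\lambda$-uniform (or at least $\mu$-integrable) control of $\E|\partial_x X_t^x|^2$ for the \emph{original} SDE \eqref{eq_main}, whose drift contains the merely measurable term $\alpha$. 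For such drifts the spatial derivative of the flow is not classically defined, and the known representations of its Sobolev derivative (the works you cite require $\alpha$ of bounded variation, or are themselves obtained \emph{via} a Zvonkin transform) are exactly the kind of regularity statement the theorem is designed to produce. The same unproved-estimate problem recurs twice more: you need $u'$, and $u''\sigma$, to decay at infinity fast enough that $\lambda x\,u'(x)$ and $(1+u')\sigma$ stay bounded and Lipschitz \emph{uniformly in $\lambda$} (under $(\mathcal{A}_3)$ the coefficients $\beta,\sigma$ are unbounded), and the claimed decay of the bounded solution of the homogeneous equation off $\supp\alpha$ is again only sketched. Until these three estimates are established with explicit $\lambda$-dependence, the threshold $\lambda_0$ and the rate $c_\lambda$ required by the statement cannot be extracted.

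The paper sidesteps all of this by using a \emph{zeroth-order} transform instead of a resolvent corrector: the scale function $s$ of \eqref{eq_scale} kills the irregular part of the drift exactly, and $s'$ is an explicit exponential of $\int(\alpha-\gamma)/\sigma^2$, so all Lipschitz constants of the transformed coefficients are computed by hand from \eqref{eq_scale_deriv_bound}--\eqref{eq_scale_deriv_Lip} and are manifestly independent of $\lambda$. The price of removing $\alpha$ exactly is that $s$ distorts the dissipative term $-\lambda x$ into $\tilde{id}=s'\circ s^{-1}\cdot s^{-1}$, and the one new idea --- which your proposal does not contain and which you would need a substitute for --- is the insertion of the intermediate drift $\gamma$ of \eqref{eq:gamma3} or \eqref{eq:gamma3'}, chosen so that $(\gamma(u)-\alpha(u))u\geq 0$ away from the origin; this is what guarantees $(\tilde{id})'\geq 1/2$ in \eqref{eq:slopetildeid}, hence $D_{\tilde b}\geq\lambda/2-L_{\tilde\beta}-L_{\tilde\gamma}$ with the $\lambda$-free constants of \eqref{eq:clambda} and \eqref{eq:clambda3'}. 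It is also in the explicit formula for $s'$ that the integrability hypotheses $(\mathcal{A}_3)$/$(\mathcal{A}_3')$ are actually used (to make $L_s$ finite); your sketch never really uses them, which is a further sign that the hard analytic content has been displaced into the unproved gradient estimate rather than resolved.
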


\section{Proofs}\label{Sec_Proof}
\begin{proof}[Proof of Proposition \ref{Prop1}]
Notice first that, applying \cite[Proposition 2.1]{Scheutzow+2017}, assumptions ($\mathcal{H}_1$)-($\mathcal{H}_2$) provide the existence of a unique global strong
 solution to \eqref{eq_dissip}, denoted here by $(Y_t^x)_{t\geq 0}$.

Fix any $y>x$ and define the stopping time $\tau:=\inf\{t\geq 0\ | \ Y_t^x   =Y_t^y\}$.
Due to the continuity of
trajectories, one has for $t\in[0,\tau), \ Y_t^y   >Y_t^x$ a.s.

So, applying  It\^o's formula to $\ln(Y_t^y-Y_t^x)$ we obtain
\begin{multline}
d\left(\ln(Y_t^y-Y_t^x)\right)=\\
\left(\frac{b(Y_t^y)-b(Y_t^x)}{Y_t^y-Y_t^x}-
\frac{\left(\sigma(Y_t^y)-\sigma(Y_t^x)\right)^2}{2(Y_t^y-Y_t^x)^2}\right)dt+
\frac{\sigma(Y_t^y)-\sigma(Y_t^x)}{Y_t^y-Y_t^x} \, dw_t,\ \ t\in[0,\tau).
\end{multline}

Moreover, according to \cite[Proposition 2.1]{Scheutzow+2017} the processes $(Y_t^x)_{t\geq 0, x\in \R}$ satisfy a coalescence property which means that as soon as two solutions
meet in a point then they stay together forever: $Y_t^x =Y_t^y, t\geq \tau$ a.s.

Therefore, by ($\mathcal{H}_1$), the following inequality holds at any time:
\begin{equation} \label{eq:vp_dist}
\ln(Y_t^y-Y_t^x)\leq
\ln(y-x)-   D_b t + \int_0^t\frac{\sigma(Y_s^y)-\sigma(Y_s^x)}{Y_s^y-Y_s^x}  1_{s<\tau} \, dw_s
\ \ \mbox{a.s.}
 \end{equation}
where $\ln 0:=-\infty.$
Note that the expression under the integral sign is bounded because the function $\sigma$ is
Lipschitz continuous.

Further, the martingale
$\displaystyle \int_0^t \frac{\sigma(Y_s^y)-\sigma(Y_s^x)}{Y_s^y-Y_s^x} 1_{s<\tau} \, dw_s$
can be represented as a Brownian motion computed at the random time
 $\displaystyle \int_0^t\left(\frac{\sigma(Y_s^y)-\sigma(Y_s^x)}{Y_s^y-Y_s^x}\right)^2 1_{s<\tau} \ ds$
which is uniformly bounded by $L_\sigma^2 \, t$. Thus the law of iterated logarithm  yields
\begin{equation} \label{eq:vp_dist_2}
\lim_{t\to\infty} \frac{1}{t} \int_0^t\frac{\sigma(Y_s^y)
-\sigma(Y_s^x)}{Y_s^y- Y_s^x} 1_{s<\tau} \,dw_s =0 \ \ \ \mbox{a.s.}
\end{equation}
Now the decreasing rate of $|Y_t^y- Y_t^x|$ announced in \eqref{eq_stability_dissip} follows from \eqref{eq:vp_dist} and \eqref{eq:vp_dist_2}.

Let us now prove the $L_p$-bound, $p\geq 2$.
For any constant $k$, it follows from It\^o's formula
\begin{multline*}
|Y_t^y-Y_t^x|^p e^{k t}= |y-x|^p\\
    + \int_0^t \Big(k|Y_s^y-Y_s^x|^2+p\left(b(Y_s^y)-b(Y_s^x)\right)(Y_s^y-Y_s^x)\Big.
  \\ +\left.\frac{p(p-1)}{2}\left(\sigma\left(Y_s^y\right)-\sigma(Y_s^x)\right)^2\right)|Y_s^y-Y_s^x|^{p-2} e^{k s}ds\\
 + \int_0^t p\left(\sigma(Y_s^y)-\sigma(Y_s^x)\right)\sgn(Y_s^y-Y_s^x)|Y_s^y-Y_s^x|^{p-1}e^{k s}\, dw_s.
\end{multline*}
Using the dissipativity of $b$ and the Lipschitzianity of $\sigma$ we get
\begin{multline} \label{ineq:Lpbound}
  |Y_t^y-Y_t^x|^p e^{k t}\leq |y-x|^p \\
  + \int_0^t \left(k-p D_b+\frac{p(p-1)}{2}L_\sigma^2\right)|Y_s^y-Y_s^x|^p e^{k s}\, ds\\
  + \int_0^t p\left(\sigma(Y_t^y)-\sigma(Y_s^x)\right)\sgn(Y_s^y-Y_s^x)|Y_s^y-Y_s^x|^{p-1}e^{k s}dw_s.
\end{multline}
The stochastic Gronwall lemma proved in  \cite{Scheutzow2013} allows to deduce that - see also the stochastic Gronwall-Lyapunov inequality in \cite{Hudde+19} which improves considerably the former results - 
 $$
 \forall T\geq0 \quad \forall x,y \in\R \quad \forall p\geq 2: \sup_{t\in[0,T]}\E|Y_t^y-Y_t^x|^p< +\infty .
$$
%
So,  since $\sigma$ is Lipschitz continuous,
the stochastic integral in the rhs of \eqref{ineq:Lpbound} is not only a local martingale but also an integrable centered martingale.
Now, as soon as $\displaystyle k\leq p D_b-\frac{p(p-1)}{2}L_\sigma^2$,
$$\E\left(|Y_t^y-Y_t^x|^p\right) e^{k t}\leq |y-x|^p $$
which implies \eqref{eq_stability_dissip_Lp}.

\end{proof}


\begin{proof}[Proof of Theorem \ref{Theorem_main}]
Notice first that assumptions ($\mathcal{A}_1$),($\mathcal{A}_2$) and the boundedness of $\alpha$ provide the existence of  a unique strong solution to equation \eqref{eq_main}. This result follows from \cite{Zvonkin74} via a localization method. \\

Now, since the function $\alpha$ appearing in the drift is not regular we cannot apply directly Proposition \ref{Prop1}. Our first step will then consist to follow Zvonkin's idea and transform the dynamics of \eqref{eq_main} into an SDE with regular drift. Unfortunately by removing only the irregular term $\alpha$, we do not obtain a transformed dynamics satisfying the dissipative assumption ($\mathcal{H}_1$). We then introduce a bounded, Lipschitz continuous, integrable \emph{intermediate function} $\gamma$, whose exact choice will be done later, see \eqref{eq:gamma3} and \eqref{eq:gamma3'}. A partial Zvonkin's transform to remove the drift $\alpha-\gamma$ will yield the SDE \eqref{eq:tildeX}, whose drift $\tilde b:= -\lambda \, \tilde{id}+\tilde \beta+\tilde\gamma$  is indeed  dissipative for $\lambda$ large enough, as we will prove.\\

So we rewrite equation \eqref{eq_main}  as follows:
\begin{equation*}
\left\{
\begin{aligned}
d\vp_t&=\left(-\lambda\vp_t+ \left(\beta(\vp_t)+\gamma(\vp_t)\right) + \left(\alpha(\vp_t)-\gamma(\vp_t)\right)\right) \,dt + \sigma(\vp_t) \, dw_t, t\geq0,\\
\vp_0&\equiv x.
\end{aligned}\right.
\end{equation*}
To eliminate the non-regular term $\alpha-\gamma$, we define the (partial) scale function $s$ on $\R$ by
\begin{equation}\label{eq_scale}
  s(x):=\int_0^x\exp\left(-2\int_0^y\frac{\alpha(z)-\gamma(z)}{\sigma^2(z)}dz\right)dy, \ x\in\R.
\end{equation}
It is differentiable and
\begin{equation}\label{eq_scale_first_deriv}
  s'(x)=\exp\left(-2\int_0^x\frac{\alpha(z)-\gamma(z)}{\sigma^2(z)}dz\right)
\end{equation}
which is uniformly bounded from below and above as follows:
\begin{equation}\label{eq_scale_deriv_bound}
 0< \frac{1}{L_s}\leq s'(x)\leq L_s< +\infty \quad \textrm{ where } \quad
 L_s:=\exp\left( 2\int_{-\infty}^\infty\frac{|\alpha(z)-\gamma(z)|}{\sigma^2(z)}dz\right).
\end{equation}

The finiteness (resp. positivity) of $ L_s$ is due to the integrability of both $\alpha$ and $\gamma$, combined with the uniform lower bound of $\sigma$.\\
Moreover, the second derivative of $s$ exists for almost all $x$ and satisfies
\begin{equation}\label{eq_scale_second_deriv}
  s''(x)=2\frac{\gamma(x)-\alpha(x)}{\sigma^2(x)} s'(x).
\end{equation}
Due to \eqref{eq_scale_deriv_bound}, $s$ is a \emph{bilateral} Lipschitz continuous function:
\begin{equation}\label{eq_scale_Lip}
\forall x,y\in\R, \quad  \frac{1}{L_s}|y-x|\leq |s(y)-s(x)|\leq L_s|y-x|.
\end{equation}
Since \eqref{eq_scale_second_deriv} yields a uniform bound on $s''$, we get that $s'$ is also global Lipschitz continuous:
\begin{equation}\label{eq_scale_deriv_Lip}
 \forall x,y \in \R, \quad  |s'(y)-s'(x)|\leq L_{s'}|y-x| \text{ where }
L_{s'}:=2\frac{\|\gamma-\alpha\|_\infty}{c_\sigma}L_s.
\end{equation}

The derivative of $s$ being positive, the function $s$ is strictly increasing. Moreover, since $s(\R)=\R$, it admits an inverse function $s^{-1} $ defined on $\R$ and being a \emph{bilateral Lipschitz continuous} function  too:
\begin{equation}\label{eq_scale_inverse_Lip}
\forall x,y\in\R, \quad   \frac{1}{L_s}\, |y-x|\leq |s^{-1}(y)-s^{-1}(x)|\leq L_s\, |y-x|.
\end{equation}
The process $s(\vp_t^x)$ satisfies the following It\^o's formula:
\begin{eqnarray*}\label{eq_s_Ito}
ds(\vp_t^x)&=&s'(\vp_t^x)d\vp_t^x+\frac12 s''(\vp_t^x)\sigma^2(\vp_t^x)dt \nonumber\\
&=& s'(\vp_t^x)\left(-\lambda\vp_t^x+\beta(\vp_t^x)+\gamma(\vp_t^x)\right) dt+s'(\vp_t^x)\sigma(\vp_t^x) \, dw_t.
\end{eqnarray*}
Note that $s''$ may not exist on a negligible set.
However, the applicability of It\^o's formula is justified, see e.g. \cite[Theorem 3]{Zvonkin74}. \\
Denote the process $s(\vp_t^x)$ by $\tilde X_t^x$. It solves the SDE:
\begin{equation} \label{eq:tildeX}
\left\{
\begin{aligned}
d\tilde X_t(x) &=\left(-\lambda \, \tilde{id}(\tilde X_t)+\tilde \beta(\tilde X_t)+\tilde\gamma(\tilde X_t)\right)dt+
\tilde \sigma(\tilde X_t) \,dw_t, \, t>0,\\
\tilde X_0&\equiv s(x),\\
\end{aligned}
\right.
\end{equation}
where the coefficients are given by
$$
\tilde{id}:= s'\circ s^{-1} \cdot s^{-1},
  \tilde\beta:= s'\circ s^{-1} \cdot \beta \circ s^{-1},
  \tilde\gamma:= s' \circ s^{-1}\cdot \gamma \circ s^{-1},
  \tilde\sigma:=s' \circ s^{-1}\cdot \sigma \circ s^{-1}.
$$
We underline that the irregular drift term $\alpha $ disappeared from the dynamics.\\

Next step in the proof of the theorem is to check that, for $\lambda$ large enough, the new drift
$$
\tilde b:= -\lambda \, \tilde{id}+\tilde \beta+\tilde\gamma
$$
 appearing in the transformed SDE \eqref{eq:tildeX} satisfies assumption ($\mathcal{H}_1$) in order to apply Proposition \ref{Prop1} to the process $\tilde X_t$.

\vspace{2mm}
\emph{Regularity of the three terms composing the drift $\tilde b$. }\\
The next lemma is straightforward.
\begin{Lemma}\label{lem:Lip}
If $f$ and $g:\R\to\R$ are two Lipschitz continuous functions with respective constant $L_f$ and $ L_g$, their composition $f\circ g $ is also a continuous  Lipschitz  function with constant $L_f L_g.$
If additionally $f$ and $g$ are bounded, then the product $fg$ is a   Lipschitz continuous
 function too with constant $\|f\|_\infty L_g+\|g\|_\infty L_f.$
\end{Lemma}
It follows from \eqref{eq_scale_inverse_Lip} and  Lemma \ref{lem:Lip} that the functions
$s'\circ s^{-1}, \beta\circ s^{-1}, \gamma\circ s^{-1},  \sigma\circ s^{-1}$
are Lipschitz continuous, with respective Lipschitz constants
$\displaystyle L_{s'}L_s$, $L_\beta L_s$, $L_\gamma L_s$,  $L_\sigma L_s$.
Then the function  $\tilde{id}$ appearing as first term in $\tilde b$ is locally  Lipschitz continuous.\\ 
Since the function  $\gamma$ we will construct will be bounded and Lipschitz continuous, by Lemma \ref{lem:Lip} the function $\tilde\gamma$ is Lipschitz continuous with constant
\begin{equation}\label{eq_tilde_gamma}
  L_{\tilde\gamma}= ( L_sL_\gamma+\|\gamma\|_\infty L_{s'})L_s.
\end{equation}
Let us
now construct the function $\gamma$ such that $\tilde \beta$  and $\tilde \sigma$
are global Lipschitz continuous.
 We distinguish both cases, depending on the assumption satisfied by the measurable function $\alpha$.

- Assumption ($\mathcal{A}_3'$) holds, i.e. $\beta$ and $\sigma$ are bounded. \\
Then, by Lemma \ref{lem:Lip}, $\tilde\beta$ and $\tilde\sigma$ are Lipschitz continuous functions with respective constants
\begin{equation}\label{eq_tilde_sigma}
 L_{\tilde\beta}=(L_sL_\beta+\|\beta\|_{\infty}L_{s'})L_s \quad \text{ and } \quad
L_{\tilde\sigma}=(L_sL_\sigma+\|\sigma\|_\infty L_{s'}) L_s.
\end{equation}

- Assumption ($\mathcal{A}_3$) holds, i.e. $\alpha$ has compact support, denoted by $[-N_{\alpha}, N_\alpha]$. \\
Since $\beta$ and $\sigma$ are not a priori bounded, one can not directly apply Lemma \ref{lem:Lip} to obtain the regularity of $\tilde\beta$ and $\tilde\sigma$.  It will be possible to  construct $\gamma$ with compact support included in $[-N_{\alpha}-1, N_\alpha +1]$.
Since the function $x \mapsto s(x)$ is then linear for $|x|\geq N_\alpha+1$, by checking the increments of $\tilde\beta$ (resp.  $\tilde\sigma$) separately on the intervals $(-\infty,s(-N_{\alpha}-1)] $, $[s(-N_{\alpha}-1), s(N_\alpha +1)]$ and $[ s(N_\alpha +1), +\infty)$ one gets that $\tilde\beta$ and $\tilde\sigma$ are global Lipschitz continuous with respective constant
\begin{equation}\label{eq_tilde_beta}
L_{\tilde\beta}=(L_sL_\beta+\|\beta\|_{N_\alpha+1}L_{s'})L_s \quad
\text{ and }
\quad L_{\tilde\sigma}=(L_sL_\sigma+\|\sigma\|_{N_\alpha+1} L_{s'})L_s,
\end{equation}
 where the following notation is used: $\|f\|_{N_\alpha+1}:=\sup_{|x|\leq N_\alpha+1} |f(x)|$.\\
Notice that all the  above Lipschitz constants $L_{\tilde \beta},  L_{\tilde \gamma},
L_{\tilde \sigma}$ may depend on the intermediate drift function $\gamma$  but not on the real coefficient $\lambda$.\\

\vspace{1mm}
\emph{Dissipative property of the drift $\tilde b$ for $\lambda$ large enough}:\\
We now show that for $\lambda$ large enough, the function $\tilde b= -\lambda \, \tilde{id}+\tilde \beta+\tilde\gamma$ is dissipative and compute its dissipative constant denoted by $D_{\tilde b}$.
To this aim, we will prove that the slope of the function $\tilde{id}$ is bounded from below by $1/2$:
\begin{equation} \label{eq:slopetildeid}
 \forall x,y \in \R, \quad \frac{\tilde{id}(y)- \tilde{id}(x)}{y-x} \geq \frac{1}{2}.
\end{equation}
With other words $\tilde{id}$ satisfies a one-sided Lipschitz property. As soon as \eqref{eq:slopetildeid} is proved, it is straightforward to deduce that
\begin{equation} \label{eq:Dtildeid}
D_{\tilde b} \geq \frac{\lambda}{2} - L_{\tilde \beta} - L_{\tilde \gamma}.
\end{equation}
So, for any $\displaystyle \lambda > 2(L_{\tilde \beta} + L_{\tilde \gamma})$, the drift $\tilde b$ is dissipative.

Let us now construct a bounded, Lipschitz continuous, integrable intermediate function $\gamma$ in such a way that \eqref{eq:slopetildeid} holds true.
It is enough to prove that the derivative of $\tilde{id}(= s'\circ s^{-1} \cdot s^{-1})$, which exists almost everywhere, is bounded from below by $\frac{1}{2}$. In fact, for a.a. $x$,
$$
(\tilde{id})'(x)=\frac{s''\circ s^{-1}(x)}{s'\circ s^{-1}(x)}s^{-1}(x)+s'\circ s^{-1}(x)\frac{1}{s'\circ s^{-1}(x)}
=\left.\left(\frac{s''(u)}{s'(u)}u+1\right)\right|_{u=s^{-1}(x)}.
$$
Recall that, since $s'$ is
an absolutely continuous function, $s''$ exists almost everywhere on $\R$.
It follows from \eqref{eq_scale_Lip} that mappings $s$ and $s^{-1}$ push sets of Lebesgue measure zero
to  sets of Lebesgue measure zero.
Thus ${s''(s^{-1}(x))}$ is independent of  a modification of $s''$ on a negligible set.\\
Taking into account \eqref{eq_scale_second_deriv}, we get
$$
\frac{s''(u)}{s'(u)}u+1=2 \frac{\gamma(u)-\alpha(u)}{\sigma^2(u)}u+1 \quad \textrm{ for a.a. } u.
$$
Let us separate both cases $\mathcal{A}_3$ and  $\mathcal{A}_3'$.

- If assumption ($\mathcal{A}_3$) holds, we denote the compact
 support of the function $\alpha$  as above by  $[-N_{\alpha}, N_\alpha]$.
Fix  a positive number $\delta< \|\alpha\|_\infty$ and define an \emph{odd} function
$\gamma$ as follows (see Figure \ref{Fig. 1}):
\begin{equation} \label{eq:gamma3}
\gamma (u)=\left\{
\begin{aligned}
&\|\alpha\|_\infty \frac{u}{\delta},& \ & u\in[0,\delta],\\
&\|\alpha\|_\infty,&  \ & u\in[\delta,N_\alpha],\\
&\|\alpha\|_\infty (N_\alpha + 1 - u),& \ & u\in[N_\alpha,N_\alpha+1],\\
&0,& \ & u\in[N_\alpha+1, +\infty),\\
&-\gamma(-u),& \ & u\in\R_-.
\end{aligned}\right.
\end{equation}
\begin{figure}[h]
 \includegraphics[width=11 cm]{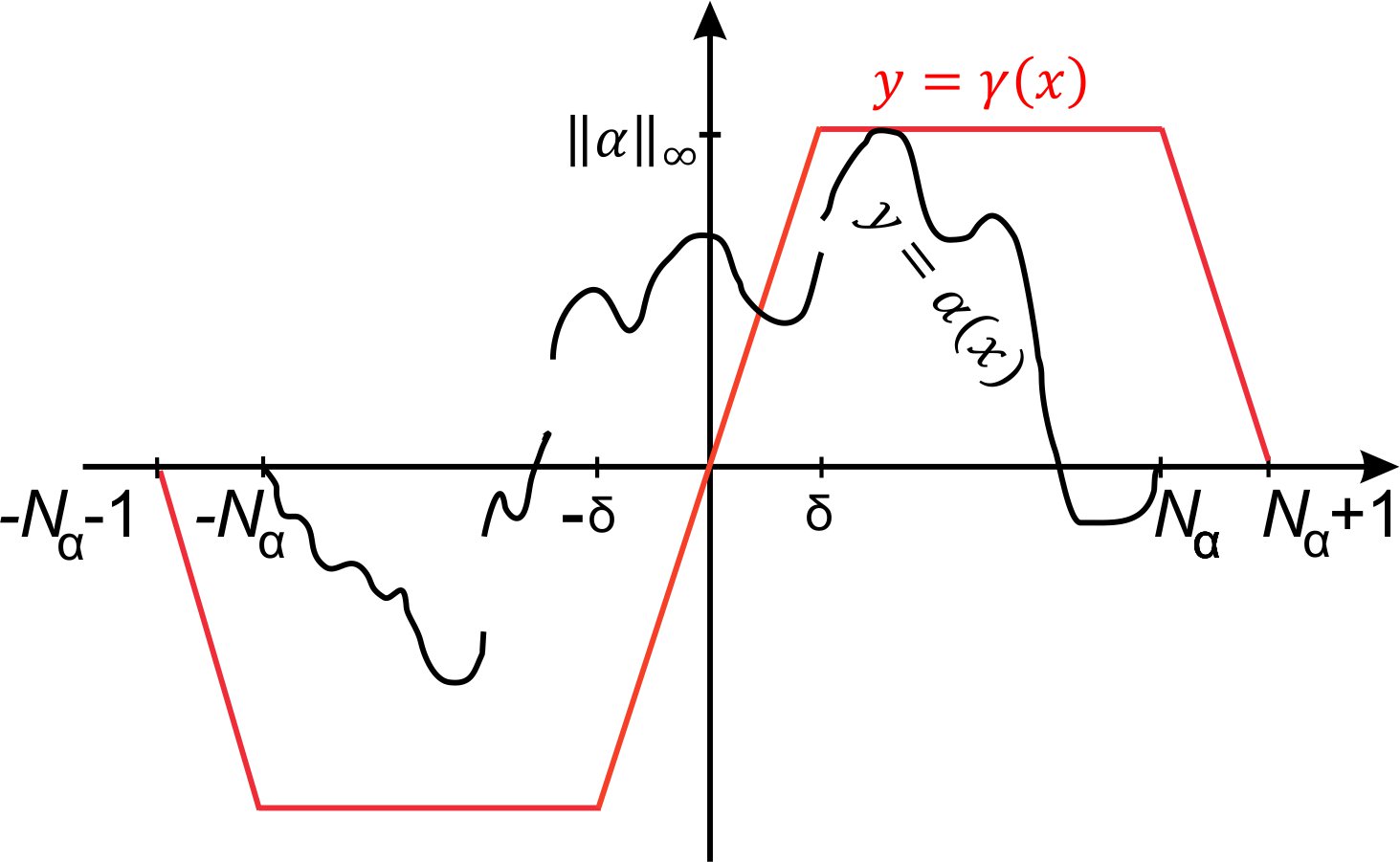}\\
 \caption{}
 \label{Fig. 1}
\end{figure}
Such function is clearly bounded, Lipschitz continuous and integrable. \\
Moreover, since by construction
$(\gamma(u)-\alpha(u))u \geq 0$ for any $|u|\geq \delta$, $u \mapsto \displaystyle \frac{s''(u)}{s'(u)}u+1$ is a.a. bounded from below  by 1 on that domain. \\
Inside of the interval $[-\delta, +\delta]$, since $\gamma(u)u \geq 0$, one has:
\begin{equation}\label{eq_ineq_posit_delta}
2\frac{\gamma(u)-\alpha(u)}{\sigma^2(u)}u+1\geq  -2\frac{ \alpha(u)}{\sigma^2(u)}u+1
\geq -2  \frac{\|\alpha\|_{\infty}}{c_\sigma}\delta+1.
\end{equation}
Choose $\delta = \displaystyle \frac  {c_\sigma}{4\|\alpha\|_{\infty}}$; one then obtains that $u \mapsto \displaystyle \frac{s''(u)}{s'(u)}u+1$ is bounded from below  by $1/2$ on $[-\delta, +\delta]$.
To summarize, we were able to construct a function $\gamma$ such that uniformly $\displaystyle (\tilde{id})'\geq 1/2$.

- If assumption ($\mathcal{A}_3'$) is fulfilled, there exists a bounded integrable Lipschitz
continuous function $g$ such that $g(u) >|\alpha(u)|, u\in\R$. Without loss of generality we may assume that $g$ is an even function.
In this case, set as above $\delta := \displaystyle \frac  {c_\sigma}{4\|\alpha\|_{\infty}}$ and define the odd function $\gamma$ as follows (see Figure \ref{Fig. 2}):
\begin{equation} \label{eq:gamma3'}
\gamma (u) =\left\{
\begin{aligned}
&g(\delta) \frac{u}{\delta}, & \ & u\in[0,\delta],\\
&g(u), & \ & u\in[\delta,+\infty),\\
&-\gamma(-u), & \ & u\in\R_-.
\end{aligned}\right.
\end{equation}
\begin{figure}[h]
   \includegraphics[width=12 cm]{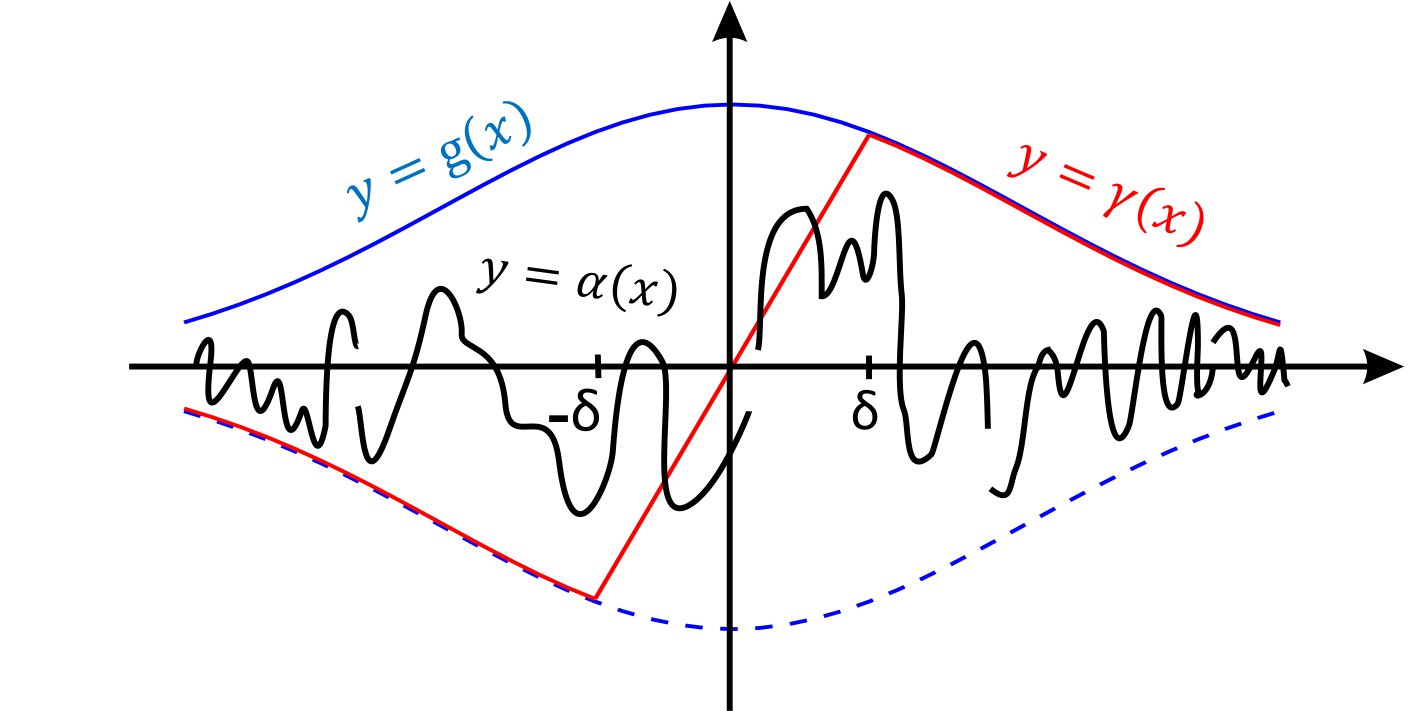}\\
 \caption{}
 \label{Fig. 2}
\end{figure}
By the same argumentation as in the first case, the function $(\tilde{id})'$ is
bounded from below by $1/2$.\\
\emph{Last steps of the proof of Theorem \ref{Theorem_main}}.\\
Applying now Proposition \ref{Prop1} to the process $(\tilde X_t)_{t\geq 0}$,
 thanks to \eqref{eq:Dtildeid}, one gets that for
 \mbox{$\displaystyle \lambda > 2(L_{\tilde \beta} + L_{\tilde \gamma})$}, the following a.s. synchronization holds
\begin{equation}\label{eq:synchXtilde}
\forall x,y\in \R, \quad  \quad \lim_{t\to +\infty}|\tilde X_t^y-\tilde X_t^x| \, e^{c t}= 0 \quad \mbox{ a.s. }
\end{equation}
for any $c < c_\lambda:= \displaystyle \frac{\lambda}{2} - L_{\tilde \beta} - L_{\tilde \gamma}\leq D_{\tilde b}$.\\
\vspace{3mm}
To deduce the  a.s. synchronization of the process $(X_t)_{t\geq 0}$ from
\eqref{eq:synchXtilde} we use the Lipschitz continuity of the function $s^{-1}$.
The exponential rate of convergence for both processes is then  identical.\\
\\
Hence, we  may select
\begin{equation}\label{eq:synch_lambda0}
\lambda_0:= 2(L_{\tilde \beta} + L_{\tilde \gamma})\ \mbox{ and }\ \ c_\lambda:= \frac{\lambda}{2} - L_{\tilde \beta} - L_{\tilde \gamma}.
\end{equation}
\\
We now compute an explicit upper bound for $ L_{\tilde \beta} + L_{\tilde \gamma} $
 using only the parameters of the SDE, and not $\gamma$.\\

- If assumption ($\mathcal{A}_3$) holds, one chooses $\gamma$ as in \eqref{eq:gamma3}. Therefore, by \eqref{eq_scale_deriv_bound}, one has
$$
L_s\leq \exp\left(8 \frac{\|\alpha\|_\infty (N_\alpha +1)}{c_\sigma^2}\right)
$$
and by \eqref{eq_scale_deriv_Lip},
$$
L_{s'} \leq \frac{4 \|\alpha\|_\infty }{c_\sigma} L_s
\leq \frac{4 \|\alpha\|_\infty }{c_\sigma}\exp\left(8 \frac{\|\alpha\|_\infty (N_\alpha +1)}{c_\sigma^2}\right) .
$$
Therefore, using the definition \eqref{eq_tilde_beta},
$$
L_{\tilde \beta} \leq \left(L_\beta+\|\beta\|_{N_\alpha+1}\frac{4 \|\alpha\|_\infty }{c_\sigma}\right)\exp\left(16 \frac{\|\alpha\|_\infty (N_\alpha +1)}{c_\sigma^2}\right)
$$
and
\begin{eqnarray*}
L_{\tilde \gamma} &\leq& \displaystyle
\left(L_\gamma+\|\gamma\|_\infty \frac{4 \|\alpha\|_\infty }{c_\sigma}\right) \exp\left(16 \frac{\|\alpha\|_\infty (N_\alpha +1)}{c_\sigma^2}\right)\\
&\leq & \|\alpha\|_\infty \left(1 + \frac{4 \|\alpha\|_\infty }{c_\sigma}\right) \exp\left(16 \frac{\|\alpha\|_\infty (N_\alpha +1)}{c_\sigma^2}\right).
 \end{eqnarray*}
 So
\begin{multline}\label{eq:clambda}
L_{\tilde \beta} + L_{\tilde \gamma}\\ \leq
\left(\|\alpha\|_\infty + L_\beta + (\|\alpha\|_\infty +\|\beta\|_{N_\alpha+1}) \frac{4 \|\alpha\|_\infty }{c_\sigma}\right)\exp\left(16 \frac{\|\alpha\|_\infty (N_\alpha +1)}{c_\sigma^2}\right)
\end{multline}

- If assumption ($\mathcal{A}_3'$) holds, one chooses $\gamma$ as in \eqref{eq:gamma3'}. By \eqref{eq_scale_deriv_bound}, one has
$L_s\leq \exp\left( 4 \frac{\|g\|_1}{c_\sigma^2}\right) $
and by \eqref{eq_scale_deriv_Lip},
$$
L_{s'} \leq \frac{4 \|g\|_\infty }{c_\sigma} L_s
\leq \frac{4 \|g\|_\infty }{c_\sigma}\exp\left( 4 \frac{\|g\|_1}{c_\sigma^2}\right)  .
$$
Therefore, using the definition \eqref{eq_tilde_sigma},
$$
L_{\tilde \beta} \leq \left(L_\beta+\|\beta\|_\infty \frac{4 \|g\|_\infty }{c_\sigma}\right)
\exp\left(8 \frac{\|g\|_1}{c_\sigma^2}\right)
$$
and
\begin{eqnarray*}
L_{\tilde \gamma} &\leq& \displaystyle
\left(L_\gamma+\|\gamma\|_\infty \frac{4 \|g\|_\infty }{c_\sigma}\right) \exp\left(8 \frac{\|g\|_1}{c_\sigma^2}\right)  \\
&\leq & \left(L_g+\frac{8 \|g\|^2_\infty }{c_\sigma}\right) \exp\left(8 \frac{\|g\|_1}{c_\sigma^2}\right) .
 \end{eqnarray*}
 So in that case,
\begin{equation}\label{eq:clambda3'}
L_{\tilde \beta} + L_{\tilde \gamma} \leq
\left(L_\beta + L_g +  (\|\beta\|_\infty + 2 \|g\|^2_\infty )\frac{4 \|g\|_\infty }{c_\sigma} \right)
\exp\left(8 \frac{\|g\|_1}{c_\sigma^2}\right) .
\end{equation}

The $L_p$-synchronization of  $(X_t)_{t\geq 0}$ is a direct consequence from the fact that $(\tilde X_t)_{t\geq 0}$    satisfies the $L_p$-bounds \eqref{eq_stability_dissip_Lp}: take $C=L_s $ and $c_{\lambda,p}= p c_\lambda -\frac{p(p-1)}{2}L_{\tilde \sigma}^2$.  Indeed, under assumption ($\mathcal{A}_3$),
$$
L_{\tilde \sigma} \leq \left(L_\sigma+\|\sigma\|_{N_\alpha+1}\frac{4 \|\alpha\|_\infty }{c_\sigma}\right)\exp\left(16 \frac{\|\alpha\|_\infty (N_\alpha +1)}{c_\sigma^2}\right)
$$
and under assumption ($\mathcal{A}_3'$)
$$
L_{\tilde \sigma} \leq \left(L_\beta+\|\sigma\|_\infty \frac{4 \|g\|_\infty }{c_\sigma}\right)
\exp\left(8 \frac{\|g\|_1}{c_\sigma^2}\right) .
$$
The constant $c_{\lambda,p}$ can also be estimated explicitly as function of the parameters of the SDE. This completes the proof.
\end{proof}
\section*{Acknowledgements}
The authors would like to warmly thank M. Scheutzow  and S. Mazzonetto for fruitful discussions on this topic. It is also their pleasure to thank an anonymous referee for drawing their attention to the
cited paper of M. Scheutzow and S. Schulze, which allowed a significant improvement of a first version of Proposition 1.


\end{document}